\newtheorem{theorem}{Theorem}[section]
\newtheorem{lemma}[theorem]{Lemma}
\crefname{lemma}{Lemma}{Lemmata}
\newtheorem{corollary}[theorem]{Corollary}
\newtheorem{proposition}[theorem]{Proposition}
\theoremstyle{definition}
\newtheorem*{theorem*}{Theorem}
\newtheorem*{conjecture*}{Conjecture}
\newcommand{\isom}{\cong}
\newcommand{\Z}{\mathbb{Z}}
\newcommand{\Q}{\mathbb{Q}}
\newcommand{\C}{\mathbb{C}}
\def\injects{\hookrightarrow}
\DeclareSymbolFontAlphabet{\amsmathbb}{AMSb}
\DeclareMathOperator{\cd}{cd}
\title{Group rings of three-manifold groups}
\author{Dawid Kielak}
\author{Marco Linton}
\address[DK]{University of Oxford, Oxford, OX2 6GG, UK}
\email{kielak@maths.ox.ac.uk}
\address[ML]{University of Oxford, Oxford, OX2 6GG, UK}
\email{marco.linton@maths.ox.ac.uk}
\begin{document}

\begin{abstract}
Let $G$ be the fundamental group of a three-manifold. By piecing together many known facts about three manifold groups, we establish two properties of the group ring $\C G$. We show that if $G$ has rational cohomological dimension two, then $\C G$ is coherent. We also show that if $G$ is torsion-free, then $G$ satisfies the Strong Atiyah Conjecture over $\C$ and hence that $\C G$ satisfies Kaplansky's Zero Divisor Conjecture.
\end{abstract}	

\maketitle

\section{Introduction}

It is the purpose of this note to present two results on the group rings of three-manifold groups. The first result regards the coherence property: a ring $R$ is \emph{(left) coherent} if every finitely generated (left) ideal $I$ of $R$ is finitely presented. The coherence of fundamental groups of three-manifolds has long been known by the compact core theorem due to Scott \cite{Scott1973} and, independently, Shalen. On the other hand, the coherence of the group ring of a three-manifold group has not yet been considered. 

By piecing together many known properties of three-manifolds, we show the following.

\begin{theorem}
\label{manifold_free-by-cyclic}
If $G$ is a finitely generated fundamental group of a three-manifold with $\cd_{\Q}(G) <3$, then $G$ is virtually free-by-cyclic, where $\cd_{\Q}$ denotes the rational cohomological dimension.
\end{theorem}

Combining this with a result from \cite{JZL23}, we obtain coherence of the group ring for low dimensional three-manifold groups.

\begin{corollary}
\label{coherence}
Let $G$ be the fundamental group of a three-manifold and let $K$ be a field. If $\cd_{\Q}(G)<3$, then $KG$ is coherent.
\end{corollary}

The key new step we use to prove \cref{manifold_free-by-cyclic} is a `drilled' version of the standard doubling construction. We suspect this was previously known to experts, although no reference could be found. 

The second result regards the well-known Kaplansky Zero-divisor Conjecture which predicts that if $G$ is a torsion-free group and $K$ is a field, then $KG$ has no non-trivial zero-divisors. If a torsion-free group $G$ satisfies the Strong Atiyah Conjecture over $\C$, then its group ring $\C G$ embeds into a skew-field and thus cannot contain any non-trivial zero-divisors -- for the statement of the conjecture and related background, see the book of L\"uck~\cite{luck_02}*{Section 10}. Linnell established the Strong Atiyah Conjecture over $\C$ for groups with bounded torsion within a large class $\mathcal{C}$ of groups \cite[Theorem 1.5]{linnell_93}. The class $\mathcal{C}$ is defined to be the smallest class of groups containing all free groups that is closed under directed unions and extensions by elementary amenable groups.

The Strong Atiyah Conjecture, and thus also the Kaplansky Zero-divisor Conjecture, has been shown to hold for fundamental groups of admissible three-manifolds by Friedl--L{\"{u}}ck \cite{friedl_19}, who showed that all such groups  lie in $\mathcal{C}$. Recall that a three-manifold is admissible if it is connected, orientable, and irreducible, its boundary is empty or a disjoint union of tori and if its fundamental group is infinite. If one knew that the Strong Atiyah Conjecture was closed under free products and finite extensions, then this would suffice to extend Friedl--L{\"{u}}ck's result to apply to all finitely generated three-manifold groups. However, closure properties of this kind are not known to hold in general and are non-trivial to establish even within restricted classes of groups. For partial results in this direction, see \cite{schick_00,linnell_07,schreve_14}. In \cref{C_section} we show that $\mathcal{C}$ is closed under free products, which may be of independent interest. 

\begin{proposition}
	\label{C_free_products}
	Linnell's class $\mathcal{C}$ is closed under arbitrary free products.
\end{proposition}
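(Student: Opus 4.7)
The plan is to reduce to finite free products and then proceed by transfinite induction on the construction-rank within $\mathcal{C}$. First observe that
\[
*_{i\in I} G_i \ = \ \bigcup_{F\subseteq I \text{ finite}} *_{i\in F} G_i
\]
is a directed union, so by closure of $\mathcal{C}$ under directed unions it suffices to establish closure under finite free products. Filter $\mathcal{C}$ in the usual way: let $\mathcal{C}_0$ be the class of free groups, let $\mathcal{C}_{\alpha+1}$ consist of the groups which are either directed unions or elementary amenable (EA) extensions of members of $\mathcal{C}_\alpha$, and set $\mathcal{C}_\lambda = \bigcup_{\alpha<\lambda}\mathcal{C}_\alpha$ at limits. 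I would then prove by transfinite induction on $\alpha$ the strengthened claim that for any family $\{H_i\}_{i\in I}$ (of arbitrary cardinality) with $H_i\in\mathcal{C}_\alpha$, the free product $*_{i\in I}H_i$ lies in $\mathcal{C}$. The base case is immediate, and at a limit ordinal the reduction to finite subfamilies bounds the ranks involved by some strictly smaller $\beta<\alpha$, so the inductive hypothesis applies.

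The successor case $\alpha=\beta+1$ is the heart of the argument. Given a finite family $H_1,\ldots,H_n\in\mathcal{C}_{\beta+1}$, first \emph{distribute} directed unions: any factor of the form $H_i=\bigcup_\gamma H_{i,\gamma}$ with $H_{i,\gamma}\in\mathcal{C}_\beta$ can be pulled outside to present $H_1*\cdots*H_n$ as a directed union of simpler free products. This reduces to the case where every $H_i$ is either in $\mathcal{C}_\beta$ or a genuine EA extension $1\to N_i\to H_i\to Q_i\to 1$ with $N_i\in\mathcal{C}_\beta$; members of $\mathcal{C}_\beta$ are treated uniformly as the trivial extension with $Q_i=1$ and $N_i=H_i$. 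The central trick is then the simultaneous quotient
\[
\phi\colon G \ = \ H_1*\cdots*H_n \ \longrightarrow \ Q_1\times\cdots\times Q_n
\]
defined on each factor as the composition $H_i\twoheadrightarrow Q_i \hookrightarrow \prod_j Q_j$. The image is the full product, which is elementary amenable (finite products of EA groups are EA). Applying the Kurosh Subgroup Theorem to $K:=\ker\phi$ viewed as a subgroup of $*_i H_i$ shows that $K$ is a free product of a free group with conjugates of intersections of the form $K\cap H_i^g$, and each such intersection is a conjugate of $N_i$. Hence $K$ is a free product of groups all lying in $\mathcal{C}_\beta$, and the inductive hypothesis gives $K\in\mathcal{C}$. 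Since $G/K$ embeds into $\prod_jQ_j$ and is therefore EA, $G$ is an EA extension of a group in $\mathcal{C}$, yielding $G\in\mathcal{C}$.

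The main point needing care is the bookkeeping in the distribution step and the verification that the Kurosh decomposition really places every non-free factor of $K$ in $\mathcal{C}_\beta$ rather than in $\mathcal{C}_{\beta+1}$. Both should be routine once the induction hypothesis is formulated for families of arbitrary cardinality: the former rests on distributivity of free products over directed unions, and the latter follows from the observation that $K\cap H_i^g$ coincides with the kernel of the restriction of $\phi$ to $H_i^g$, which is a conjugate of $N_i$.
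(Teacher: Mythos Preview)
Your argument is correct and follows essentially the same route as the paper: stratify $\mathcal{C}$ by ordinals, map the free product onto the direct sum/product of the elementary-amenable quotients, apply Kurosh to identify the kernel as a free product of a free group with conjugates of the $N_i$, and invoke the inductive hypothesis. The only organisational difference is that you first reduce to finite free products and then distribute directed-union factors out before the Kurosh step, whereas the paper keeps the directed-union factors inside the kernel and instead writes the kernel itself as a directed union $K=\bigcup_{\beta<\alpha}K_\beta$; both are valid ways to push the directed-union factors down a level.
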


Since $\mathcal{C}$ is closed under finite extensions, we may then use this in conjunction with \cref{manifold_free-by-cyclic} to extend Friedl and L{\"{u}}ck's result to all three-manifold groups.

\begin{theorem}
\label{main thm}
Fundamental groups of three-manifolds lie in Linnell's class $\mathcal C$. 
\end{theorem}

Combining this with Linnell's theorem, we obtain our second result on group rings of three-manifold groups.

\begin{corollary}
	\label{main cor}
Let $G$ be the fundamental group of a three-manifold.
\begin{enumerate}
\item If $G$ has bounded torsion (in particular, if $G$ is finitely generated), then $G$ satisfies the Strong Atiyah Conjecture over $\C$.
\item If $G$ is torsion-free, then $\C G$ has no non-trivial zero divisors and hence the Kaplansky Zero-divisor Conjecture holds for $\C G$.
\end{enumerate}
\end{corollary}

We remark that without the assumption of bounded torsion, the Strong Atiyah Conjecture is false in general by work of Grigorchuk--Linnell--Schick--\.{Z}uk \cite{grigorchuk_00}.

\subsection*{Acknowledgements}
The authors would like to thank the referee, whose comments led to a greatly improved exposition.
This work has received funding from the European Research Council (ERC) under the European Union's Horizon 2020 research and innovation programme (Grant agreement No. 850930).

The authors would like to thank Adele Jackson for insightful conversations on Seifert-fibred spaces, and Peter Shalen for comments on an earlier version of this article.

\section{Low dimensional three-manifold groups}

We start by listing some properties of free products.
The first two statements in the following result appear as \cite[Lemma 14 \& Theorem 15]{baumslag_09}. The third is a straightforward application of the Kurosh Subgroup Theorem.

\begin{proposition}
	\label{closure}
	\begin{enumerate}
		\item A free product of any number of free-by-$\Z$ groups is free-by-$\Z$.
		\item A free product of finitely many virtually free-by-cyclic groups is virtually free-by-cyclic.
		\item A free product of finitely many virtually torsion-free groups is virtually torsion-free.
	\end{enumerate}	
\end{proposition}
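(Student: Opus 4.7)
The plan is to treat all three parts uniformly via Bass--Serre theory applied to the standard tree of a free product, combined with the universal property of the free product for (1) and with passage to the kernel of a homomorphism to a product of finite quotients for (2) and (3). The only point requiring a little care is that the index set in (1) may be infinite, which is precisely why Bass--Serre theory is the natural tool rather than topological arguments that apply only to finite free products.

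For (1), write each $G_i$ as an extension $1 \to F_i \to G_i \xrightarrow{\phi_i} \Z \to 1$ with $F_i$ free, and combine the $\phi_i$ into a single homomorphism $\phi \colon G \coloneqq *_i G_i \to \Z$ via the universal property of the free product. The kernel $K = \ker \phi$ is normal in $G$, so $K \cap gG_ig^{-1} = gF_ig^{-1}$ is free for every $i$ and every $g \in G$. The Bass--Serre tree of the splitting $G = *_i G_i$ has trivial edge stabilizers, and hence the induced action of $K$ provides a graph of groups decomposition of $K$ with free vertex groups and trivial edge groups; therefore $K$ is free.

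For (2), I would first observe that every virtually free-by-cyclic group contains a finite-index free-by-$\Z$ subgroup: if the cyclic quotient is finite, pass to the free kernel, which is free-by-$\Z$ via the zero map. After replacing by normal cores, we may assume that each $G_i$ admits a finite-index normal free-by-$\Z$ subgroup $N_i$ with $Q_i \coloneqq G_i/N_i$ finite. Combining the quotient maps yields a homomorphism $\phi \colon G \to \prod_i Q_i$ with finite image, whose kernel $K$ is normal of finite index and satisfies $K \cap gG_ig^{-1} = gN_ig^{-1}$. The Bass--Serre analysis of (1) then presents $K$ as a free product of conjugates of the $N_i$ together with a free group, every factor being free-by-$\Z$; applying (1) to this presentation shows $K$ is itself free-by-$\Z$, and hence $G$ is virtually free-by-cyclic.

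For (3), the same template works with torsion-freeness in place of the free-by-$\Z$ property: choose normal finite-index torsion-free subgroups $H_i \trianglelefteq G_i$ and form the kernel $K$ of the induced map $G \to \prod_i G_i/H_i$. The Kurosh subgroup theorem (equivalently, Bass--Serre analysis as above) presents $K$ as a free product of conjugates of the $H_i$ together with a free group; each factor is torsion-free, and a free product of torsion-free groups is torsion-free, so $K$ is torsion-free of finite index.
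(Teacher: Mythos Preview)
Your proof is correct. The paper does not give its own argument for this proposition: it cites \cite[Lemma~14 \& Theorem~15]{baumslag_09} for items (1) and (2) and remarks only that (3) is ``a straightforward application of the Kurosh Subgroup Theorem''. Your treatment of (3) is exactly that application, and your arguments for (1) and (2)---combining the maps to $\Z$ (respectively to the finite quotients) via the universal property and analysing the kernel on the Bass--Serre tree---are the standard ones, so there is nothing substantive to compare.
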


The following is a summary of well-known facts about three-manifold groups. We include a proof for the readers convenience.

\begin{proposition}
\label{facts}
Let $M$ be a connected three-manifold and let $G$ be its fundamental group. If $G$ is finitely generated, then there is a finitely generated free group $F$ and finitely many compact, connected, orientable, aspherical three-manifolds $M_1, \dots, M_n$, each with a (possibly trivial) incompressible boundary, such that a finite-index subgroup of $G$ is isomorphic to the free product $F*\left(*_{i=1}^n\pi_1(M_i)\right)$.
\end{proposition}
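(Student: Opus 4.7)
The plan is to apply the standard three-manifold toolkit sequentially, passing to finite-index subgroups of $G$ as needed. First, by passing to the orientation double cover, I would assume $M$ is orientable at the cost of index at most two. Since $G$ is finitely generated, Scott's Compact Core Theorem then furnishes a compact connected submanifold of $M$ whose inclusion induces an isomorphism on $\pi_1$, so I may assume $M$ is itself compact, connected, and orientable.

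Next, I would invoke the Kneser--Milnor Prime Decomposition Theorem together with van Kampen's Theorem to express $\pi_1(M)$ as a free product of the fundamental groups of the prime factors. Each $S^2 \times S^1$ summand contributes a copy of $\Z$, each $S^3$ summand is trivial, and the remaining prime pieces are irreducible. For each irreducible piece with compressible boundary, the Loop Theorem produces an essential compressing disk, and cutting along it either gives a free product splitting (in the separating case) or introduces a new $\Z$ free factor via an HNN extension over the trivial group (in the non-separating case). Haken's finiteness theorem guarantees this process terminates after finitely many steps, and any $S^2$ boundary components produced along the way can be capped off with $3$-balls without affecting $\pi_1$ in an irreducible manifold. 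At this stage, $\pi_1(M)$ has the form $F_0 * \pi_1(N_1) * \cdots * \pi_1(N_m)$, where $F_0$ is a finitely generated free group and each $N_i$ is compact, connected, orientable, irreducible, with incompressible non-spherical boundary.

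Every such $N_i$ with non-empty boundary has infinite $\pi_1$, and a closed irreducible orientable three-manifold with infinite $\pi_1$ is aspherical: its universal cover has trivial $\pi_2$ by the Sphere Theorem, is an irreducible open three-manifold, and is thus contractible by Hurewicz. The only non-aspherical pieces among the $N_i$ are therefore closed spherical space forms. To remove these, I would let $H$ be the kernel of the homomorphism from $\pi_1(M)$ onto the direct product of the finite factors $\pi_1(N_i)$: it has finite index in $G$ and meets each finite factor trivially. The Kurosh Subgroup Theorem then writes $H$ as a free product of a finitely generated free group and finitely many conjugates of subgroups $H \cap \pi_1(N_j)$ for the aspherical $N_j$, each of which has finite index in $\pi_1(N_j)$ and hence equals $\pi_1$ of a finite cover of $N_j$, itself compact, connected, orientable, aspherical and with incompressible boundary.

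The most delicate point is the final Kurosh bookkeeping: verifying that only finitely many factors appear in the decomposition of $H$, that each is finitely generated, and that each corresponds to a finite cover of one of the aspherical $N_j$. This follows from the finite index of $H$ in $G$ together with the finite generation of $G$, but deserves care. The earlier steps are direct invocations of classical results.
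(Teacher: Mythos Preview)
Your proposal is correct and follows essentially the same route as the paper: orientation double cover, Compact Core Theorem, Prime Decomposition plus the Loop Theorem to reach irreducible pieces with incompressible boundary, the Sphere Theorem to see that the only non-aspherical pieces have finite fundamental group, and then a passage to a finite-index subgroup to eliminate those finite factors. The only cosmetic difference is that the paper packages the last step by invoking \cref{closure}(3) and re-running the argument on a finite cover, whereas you unwind this explicitly via the kernel of the map to the product of the finite factors and Kurosh; the content is the same.
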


\begin{proof}
By replacing $M$ with its orientation double cover, we may assume that $M$ is orientable. The Compact Core Theorem allows us to assume that $M$ is compact. Since an orientable prime non-irreducible three-manifold is homeomorphic to $S^2\times S^1$, which has fundamental group $\Z$, we may combine the Prime Decomposition Theorem with the Loop Theorem (in the form of \cite[Lemma 1.4.2]{aschenbrenner_15}) to obtain a finitely generated free group $F$ and finitely many compact, connected, irreducible and orientable three-manifolds $M_1, \dots, M_k$ with (non-spherical and possibly empty) incompressible boundaries, such that $G\isom F*\left(*_{i=1}^k\pi_1(M_i)\right)$.

If $G$ is torsion-free, then each $M_i$ would also be aspherical by the Sphere Theorem. If $G$ is not torsion-free, then it suffices to show that $G$ is virtually torsion-free as then we can apply the above argument to an appropriate finite sheeted cover of $M$. Since each $M_i$ is either aspherical or covered by $S^3$, it follows that each $\pi_1(M_i)$ is either torsion-free or finite. Since a finite free product of virtually torsion-free groups is virtually torsion-free by \cref{closure}, it follows that $G$ is virtually torsion-free.
\end{proof}

In order to prove \cref{manifold_free-by-cyclic} we shall need a variation of the standard doubling construction.

\begin{proposition}[Drilled doubling construction]
\label{drilled}
Let $M$ be a compact aspherical three-manifold with non-empty incompressible boundary. Then $\pi_1(M)$ is a retract of the fundamental group of a compact aspherical three-manifold with non-empty purely toroidal boundary.
\end{proposition}

\begin{proof}
Assuming $\pi_1(M)$ is non-trivial, there is some embedded curve $\gamma\colon S^1\injects M$ which is not contained in an embedded three-ball and does not intersect the boundary. There is a tubular neighbourhood $T\subset M$ of $\gamma(S^1)$ whose boundary does not intersect the boundary of $M$. Denote by $N$ the compact three-manifold obtained from $M$ by removing $T$ and by $\nu\colon N\injects M$ the canonical inclusion. We claim that $\pi_1(M)$ is a subgroup of $\pi_1(M\cup_{\partial M}N)$ and that $M\cup_{\partial M}N$ remains irreducible, from which the result will follow.

We first show that $\pi_1(M)$ is a subgroup $\pi_1(M\cup_{\partial M}N)$. We have a canonical inclusion $M\injects M\cup_{\partial M}N$ and a map $M\cup_{\partial M}N\to M$ defined by the identity on $M$ and $\nu$ on $N$. As the composition of these two maps is the identity, we see that $\pi_1(M)$ is a subgroup of $\pi_1(M\cup_{\partial M}N)$. In particular, $\pi_1(M)$ is a retract of $\pi_1(M\cup_{\partial M}N)$.

We now show that $M\cup_{\partial M}N$ is irreducible. First note that as the image of $\gamma$ is not contained in an embedded ball, it follows that $N$ is also irreducible. Suppose that $M\cup_{\partial M}N$ is not irreducible. Then there is a sphere $S^2$, embedded in the interior of $M\cup_{\partial M}N$, that does not bound a ball. After an isotopy, we may assume that $S^2$ is transverse to $\partial M$. Thus, $S^2\cap \partial M$ is either empty or consists of embedded circles. If the intersection is non-empty, take some innermost circle $S^1\subset S^2\cap \partial M$. Then the two-disc $D$ it bounds in $S^2$ must embed in $M$ or $N$. As $M$ had incompressible boundary, it follows that we may isotope $D$ through $\partial M$ and reduce the number of components in $S^2\cap \partial M$. Continuing in this way, we see that we may isotope $S^2$ so that $S^2\cap \partial M$ is empty. If $S^2\cap \partial M$ is empty, then $S^2$ is contained in $M$ or $N$. As $M$ and $N$ are both irreducible, it follows that $M\cup_{\partial M}N$ is also irreducible. Since $N$ is irreducible and $\pi_1(N)$ is infinite, it follows that $N$ is aspherical.
\end{proof}

\begin{proof}[Proof of \cref{manifold_free-by-cyclic}]
By \cref{closure,facts}, we may assume that $M$ is compact, orientable, aspherical and with incompressible boundary. As subgroups of virtually free-by-cyclic groups are virtually free-by-cyclic, we may also assume that $M$ has non-empty purely toroidal boundary by \cref{drilled}. If $M$ is a hyperbolic three-manifold, then it is virtually fibred by \cite[Theorem 17.14]{wise_21_quasiconvex} and \cite[Theorem 1.1]{Agol2008}. If $M$ is a graph manifold, it is virtually fibred by \cite{wang_97}. In all other cases, it is virtually fibred by \cite[Corollary 1.3]{przytycki_18}. As $M$ has non-empty toroidal boundary, it virtually fibres with fibre a surface with non-empty boundary. Thus, $G$ is virtually free-by-cyclic.
\end{proof}

\begin{proof}[Proof of \cref{coherence}]
We first make two general observations about coherence of group rings. Let $G$ be a group and $R$ a ring. The first observation is that if $H\leqslant G$ is a finite index subgroup with $RH$ coherent, then $RG$ is coherent. The second is that if $RH$ is coherent for all finitely generated subgroups $H\leqslant G$, then $RG$ is coherent. We now specialise to the case where $G$ is the fundamental group of a three-manifold group with $\cd_{\Q}(G)<3$ and where $R = K$ is a field. By the above two observations and \cref{manifold_free-by-cyclic}, it suffices to prove the result in the case that $G$ is free-by-cyclic. This is precisely \cite[Theorem 3.3]{JZL23}.
\end{proof}

\subsection{An example}

We now present an example of a locally virtually free-by-cyclic three-manifold group that is not virtually free-by-cyclic, showing that \cref{manifold_free-by-cyclic} is sharp. A straightforward example would be a three-manifold whose fundamental group has no bound on the order of its torsion elements. Instead, we construct an example that is Seifert fibred with base orbifold of infinite type and whose fundamental group is torsion-free.

We first require a couple of facts about Seifert fibred spaces. The reader is directed towards \cite[Section 10]{martelli_22} for the necessary background. 

Let $M$ be a compact orientable three-manifold and let $M\to S$ be a Seifert fibration. If $M$ fibres over the circle, then $M$ admits a foliation by compact surfaces. Note that if $M$ has non-trivial boundary, then we may glue solid tori along the boundary and extend this foliation. So now we may apply work of Eisenbud--Hirsch--Neumann \cite[Theorems 3.4 \& 6.1]{eisenbud_81} to conclude that if $S$ has genus at least two, then such a foliation must be homotopic to a foliation transverse to the fibres of the Seifert fibration and moreover exists only if $e(M) = 0$, where $e(M)$ denotes the Euler number of the Seifert fibration. Recall that the Euler number of a Seifert fibred space with boundary is only defined modulo the integers.

Now let $M_n$ denote the orientable Seifert fibred space with Euler number $e(M) = 1/n$  whose base orbifold is a genus two surface with two boundary components and a single cone point. The Euler characteristic of $M_n$ is zero, so any finite index subgroup of $\pi_1(M_n)$ that is free-by-cyclic, must be \{finitely generated free\}-by-cyclic by work of Feighn--Handel \cite{feighn_99}. In particular, by Stallings' Fibration Theorem \cite{stallings_62}, the corresponding cover of $M_n$ must fibre over the circle. Since any finite degree cover of $M_n$ induces a finite degree orbifold cover of the base orbifold, we see that the minimal degree of a cover of $M_n$ with Euler number zero is $n$. So by the previous paragraph, the minimal degree of a cover of $M_n$ that fibres over the circle is $n$.

Consider the three-manifold 
\[
M = M_2\cup_T M_3\cup_T\ldots
\]
where $T$ is one of the boundary tori of $M_i$. If $\pi_1(M)$ were virtually free-by-cyclic, then this would imply that every $M_n$ admits a finite cover of uniformly bounded index that fibres over the circle. But this cannot happen by the previous paragraph and so $\pi_1(M)$ is not virtually free-by-cyclic; it is locally virtually free-by-cyclic by \cref{manifold_free-by-cyclic}.

\section{Linnell's class $\mathcal{C}$}
\label{C_section}

In this section we show that Linnell's class $\mathcal{C}$ is closed under free products and use this to show that all three-manifold groups lie in $\mathcal{C}$.

We first remind the reader of the definition of elementary amenable groups. Denote by $\mathcal{EA}_0$ the class of groups that are abelian or finite. For each ordinal $\alpha$, we define $\mathcal{EA}_\alpha$ to consist of extensions of groups in $\mathcal{EA}_{\beta}$ for $\beta < \alpha$, and all directed unions of groups, each of which lies in some $\mathcal{EA}_\beta$ with $\beta<\alpha$. The union of all the classes $\mathcal{EA}_\alpha$ is precisely the class of elementary amenable groups.

We may similarly stratify Linnell's class $\mathcal{C}$. Denote by $\mathcal{C}_0$ the class of free groups. For each ordinal $\alpha$, let $\mathcal{C}_{\alpha}$ consist of all elementary amenable extensions of groups in $\mathcal{C}_{\beta}$ for $\beta < \alpha$, and all directed unions of groups, each of which lies in some $\mathcal C_\beta$ with $\beta<\alpha$. It is clear that the class $\mathcal C$ is precisely the union of the classes $\mathcal C_\alpha$ taken over all ordinals $\alpha$.

It is easy to see that every  $\mathcal{EA}_\alpha$ and every $\mathcal C_\alpha$ are closed under taking subgroups.

\begin{proof}[Proof of \cref{C_free_products}]
	Let $\alpha$ be an ordinal. Consider a free product $*_{i\in I}G_i$ where for all $i$, $G_i\in \mathcal{C}_{\alpha}$. We claim that $*_{i\in I}G_i\in\mathcal{C}$. The proof is by transfinite induction. Since free products of free groups are free, the base case holds.
	
	Now consider an ordinal $\alpha>0$, and suppose that the claim is true for all ordinals $\beta$ with $\beta < \alpha$.
	Let $J \subseteq I$ be such that for every $i \not\in J$ we have
\[
1\to N_i\to G_i\to A_i\to 1
\]
with $N_i\in \mathcal{C}_{\beta_i}$ where $\beta_i < \alpha$, and with $A_i$ elementary amenable (and possibly trivial), and for every $i \in J$ we have $G_i = \bigcup_{j\in J_i}G_{i, j}$ with $G_{i, j}\in \mathcal{C}_{\beta_{i,j}}$ for some $\beta_{i,j}<\alpha$. For $\beta < \alpha$ let $G_{i,\beta}$ denote the union of the subgroups $G_{i,j}$ such that $\beta_{i,j}< \beta$; if no such subgroup exists, we take $G_{i,\beta}$ to be the trivial group. By definition, $G_{i,\beta} \in \mathcal C_\beta$.

 Consider the homomorphism
	\[
	*_{i\in I}G_i\to \bigoplus_{i\in I - J}A_i
	\]
	obtained from the homomorphisms above in the obvious way, and with groups $G_i$ for $i \in J$ lying in the kernel.
	Note that the image is a subgroup of a direct sum of elementary amenable groups, and hence an elementary amenable group itself.
	By the Kurosh Subgroup Theorem, the kernel $K$ is a free product of conjugates of the groups $N_i$, conjugates of the groups $G_i$ with $i \in J$, and a free group $F$.
	
	Take $\beta<\alpha$. Let $K_\beta$ denote the free product of $F$ with the conjugates of the groups $N_i$ that lie in $\mathcal C_\beta$, and with conjugates of groups $G_{i,\beta}$ for $i \in J$. By the inductive hypothesis, $K_\beta$ lies in $\mathcal C$. Note that $K = \bigcup_{\beta<\alpha} K_\beta$, and hence $K$ lies in $\mathcal C$. But then  
	$*_{i\in I}G_i$ lies in $\mathcal C$ as well, being an extension of $K$ by an elementary amenable group.
	
	We finish by observing that every free product of groups in $\mathcal C$ is a free product of groups lying in $\mathcal C_\alpha$ for some $\alpha$, since $\mathcal C$ is the union of the classes $\mathcal C_\alpha$, and ordinals are closed under taking unions.
\end{proof}

We need one final result due to Friedl--L\"uck and then we will be ready to prove \cref{main thm}.

\begin{theorem}[Friedl--L\"uck {\cite[Theorem 3.2(3)]{friedl_19}}]
\label{aspherical_C}
Let $M$ be a closed, connected, orientable and aspherical three-manifold. The fundamental group $G$ of $M$ lies in Linnell's class $\mathcal C$.
\end{theorem}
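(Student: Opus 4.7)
The plan is to apply Geometrization and treat each piece separately, exploiting that Linnell's class $\mathcal{C}$ is closed under subgroups, directed unions, and extensions by elementary amenable groups---and hence, by passing to normal cores of finite-index subgroups, also under finite-index supergroups.

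First, if $\pi_1(M)$ is virtually polycyclic (the Euclidean, Nil, and Sol cases), then it is elementary amenable and already lies in $\mathcal C$. Second, if $M$ is hyperbolic, carries the $\mathbb H^2 \times \R$ geometry, is a graph manifold other than an exceptional Seifert case, or is a mixed non-geometric manifold, then the combined virtual fibring results of Agol, Wang, and Przytycki--Wise produce a finite cover $M'$ fibering over the circle with a closed surface fibre $\Sigma$. Then $\pi_1(M') \isom \pi_1(\Sigma) \rtimes \Z$, and a closed surface group itself lies in $\mathcal{C}$, because its commutator subgroup has infinite index in it, hence has cohomological dimension at most one, hence is free, with quotient $\Z^{2g}$ elementary amenable. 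Thus $\pi_1(M')$, being an extension of a group in $\mathcal C$ by $\Z$, lies in $\mathcal C$.

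The main obstacle is the remaining case: closed Seifert fibered spaces with hyperbolic base orbifold and non-zero Euler number, which carry the $\widetilde{SL}_2(\R)$ geometry and are provably \emph{not} virtually fibered. After passing to a finite cover $M'$, one has a principal circle bundle over a closed orientable hyperbolic surface $\Sigma_g$, so $\pi_1(M')$ fits in a central extension
\[
1 \to \Z \to \pi_1(M') \to \pi_1(\Sigma_g) \to 1.
\]
The trick I would use here is to pull back the commutator subgroup $F = [\pi_1(\Sigma_g), \pi_1(\Sigma_g)]$, which is free by the argument above, to a subgroup $\widetilde F \trianglelefteq \pi_1(M')$ (normal since $F$ is characteristic in $\pi_1(\Sigma_g)$). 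The induced central extension $1 \to \Z \to \widetilde F \to F \to 1$ must split, because $H^2(F, \Z) = 0$ for any free group $F$; hence $\widetilde F \isom \Z \times F$ is an extension of a free group by $\Z$ and thus lies in $\mathcal C$. Since $\pi_1(M')/\widetilde F \isom \Z^{2g}$ is elementary amenable, $\pi_1(M')$ itself is in $\mathcal C$.

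Assembling the three cases and invoking closure of $\mathcal C$ under finite-index supergroups recovers the statement for $G = \pi_1(M)$. The hardest part is the $\widetilde{SL}_2(\R)$ case, where virtual fibring fails; the splitting of the central extension via $H^2(F, \Z) = 0$ is the key ingredient that rescues it.
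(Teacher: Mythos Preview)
The paper does not actually prove this theorem; it is quoted from Friedl--L\"uck \cite[Theorem 3.2(3)]{friedl_19} and used as a black box in the proof of \cref{main thm}. Your proposal therefore goes further than the paper, supplying a self-contained argument via Geometrisation.

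Most of your argument is correct, and the treatment of the $\widetilde{SL}_2(\R)$ case---splitting the pulled-back central extension over the free commutator subgroup using $H^2(F;\Z)=0$---is a clean way to handle the one geometry where virtual fibring genuinely fails.

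There is, however, a real gap in the closed non-geometric graph-manifold case. You assert that such manifolds virtually fibre by ``Wang'', but the result of Wang--Yu \cite{wang_97} (the reference used elsewhere in this paper) is for graph manifolds with non-empty boundary. For \emph{closed} graph manifolds, virtual fibring is governed by a non-trivial combinatorial criterion (Neumann; Buyalo--Svetlov), and there exist closed non-geometric graph manifolds that do \emph{not} virtually fibre. Your $\widetilde{SL}_2(\R)$ trick does not directly extend to these either, since there is no global Seifert fibration and the fibre subgroups of the individual JSJ pieces are not jointly normal in $\pi_1(M)$. This case therefore needs a separate treatment; presumably Friedl--L\"uck's own argument covers it, but your outline as written does not.
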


\begin{proof}[Proof of \cref{main thm}]
	Let $M$ be a connected three-manifold with fundamental group $G$.
	
	Suppose first that $G$ is finitely generated. Since class $\mathcal{C}$ is closed under extensions by finite groups and under free products by \cref{C_free_products}, we may assume that $M$ is compact, orientable, aspherical and with incompressible boundary by \cref{facts}. If $M$ has empty boundary, we are done by \cref{aspherical_C}. Otherwise, we are done by \cref{manifold_free-by-cyclic}, since $M$ is homotopy equivalent to its spine, which is an aspherical two-complex, and hence $\cd_{\Q}(G) <3$.
	
	Finally, suppose that $\pi_1(M)$ is not finitely generated. Let $M_0\to M_1\to \ldots\to M$ be a sequence of covers such that $\pi_1(M_i)$ is finitely generated for all $i$ and such that $\bigcup_{i\geqslant 0}\pi_1(M_i) = \pi_1(M)$. Since we showed above that $\pi_1(M_i)$ is in $\mathcal{C}$ for all $i$, the group $\pi_1(M)$ is also in $\mathcal{C}$.	
\end{proof}

\begin{proof}[Proof of \cref{main cor}]
	Linnell \cite{linnell_93}*{Theorem 1.5} proved that all groups in $\mathcal C$ with a uniform bound on cardinalities of torsion subgroups satisfy the Strong Atiyah Conjecture over $\C$. It is very easy to see that torsion-free groups satisfying this conjecture do not have non-trivial zero-divisors, see \cite{luck_02}*{Lemma 10.15}. If $G$ is not torsion-free, but is finitely generated, \cref{facts} implies that $G$ is virtually torsion-free. Being virtually torsion-free gives a bound on the size of torsion subgroups. This finishes the proof.
\end{proof}

\subsection{Further properties of $\mathcal{C}$}

Schick proved in \cite{schick_00} (see also \cite{schick_02}) that class $\mathcal{D}$, a class containing the torsion-free groups from $\mathcal{C}$, is closed under free products. However, the argument relies on class $\mathcal{D}$ being closed under direct sums, which class $\mathcal{C}$ is not by the following lemma.

\begin{lemma}
If $G = A\times B\in \mathcal{C}$, then $A, B\in \mathcal{C}$ and at least one of $A$ or $B$ is elementary amenable.
\end{lemma}

\begin{proof}
Class $\mathcal{C}$ is closed under taking subgroups, so certainly $A, B\in \mathcal{C}$. Suppose now that $A$ and $B$ are not elementary amenable. As elementary amenable groups are closed under directed unions and extensions, it follows that both $A$ and $B$ must contain a non-abelian free subgroup. In particular, $G$ contains a copy of $F_2\times F_2$. 

Let $\alpha$ be the first ordinal such that $\mathcal{C}_{\alpha}$ contains a group $K$ containing a copy of $F_2\times F_2$. If $K$ is a directed union of groups, each of which lies in some $C_{\beta}$ with $\beta<\alpha$, then since $F_2\times F_2$ is finitely generated, it appears as a subgroup of one of the groups in this directed union, contradicting minimality of $\alpha$. If $K$ is an extension of a group $N$ in $C_{\beta}$, for $\beta<\alpha$, by an elementary amenable group $Q$, then we claim that $N$ must also contain a copy of $F_2\times F_2$. Indeed let $L$ and $R$ be the kernels of the induced maps from the two $F_2$ factors to $Q$. As $Q$ is elementary amenable and non-trivial normal subgroups of non-abelian free groups are non-abelian free, we must have that $L$ and $R$ are non-abelian free groups. Since $L\times R\leqslant N$, the claim follows. But this also contradicts the minimality of $\alpha$. So no group in $\mathcal{C}$ can contain $F_2\times F_2$, a contradiction.
\end{proof}

Recall that a group has property \emph{$\mathrm{FAb}$} if its finite-index subgroups have finite abelianisations. Property $\mathrm{FAb}$ is a consquence of Property $T$. It is immediate that \emph{$\mathrm{FAb}$} passes to finite-index subgroups and overgroups. 

\begin{lemma}
	If $G$ is a finitely generated group with $\mathrm{FAb}$ lying in  $\mathcal C$ then every elementary amenable quotient of $G$ is finite. Therefore, $G$ is itself finite.
\end{lemma}   
\begin{proof}
	We will argue by contradiction.
	Take $\alpha$ to be the smallest ordinal such that there exists $Q \in \mathcal{EA}_\alpha$ that is infinite and fits into a short exact sequence
	\[1\to N \to G \to Q\to1 \]
	with $G$ a finitely generated group in $\mathcal C$ with property $\mathrm{FAb}$. 
	
	Since $Q$ is finitely generated and infinite, it cannot be abelian as $G$ has $\mathrm{FAb}$. Hence we see that $\alpha \neq 0$ and so $Q$ is an extension
	\[
	1\to Q_0 \to Q \to Q_1\to1
	\]
	with $Q_0$ and $Q_1$ lying in $Q \in \mathcal{EA}_\beta$ for some $\beta < \alpha$.
 	Thus $G$ maps onto $Q_1$ with kernel an extension of $N$ by an elementary amenable group. This forces the kernel to lie in $\mathcal C$, and hence $Q_1$ is finite by minimality of $\alpha$. But then a finite-index subgroup of $G$ fits into an exact sequence with kernel $N$ and quotient $Q_0$. By minimality of $\alpha$, the group $Q_0$ is finite as well. This proves that $Q$ was also finite, a contradiction.
	
	\smallskip
	We will now prove the last claim. Again, this will be done by contradiction. Suppose that $\gamma$ is the smallest ordinal such that $\mathcal C_\gamma$ contains an infinite, finitely generated group $G$ with $\mathrm{FAb}$. Since non-trivial free groups do not have $\mathrm{FAb}$, and since $G$ is finitely generated, the group $G$ must fit into a short exact sequence 
	\[
	1\to N \to G \to Q \to 1
	\] 
	with $N$ in $\mathcal C_\delta$ for some $\delta<\gamma$, and with $Q$ elementary amenable. But then $Q$ must be finite, and hence $N$ has property $\mathrm{FAb}$. This contradicts the minimality of $\gamma$.
\end{proof}

\bibliographystyle{amsalpha}
\bibliography{bibliography}

\end{document}